\newcommand{\Q}{\mathbb{Q}}
\newcommand{\Z}{\mathbb{Z}}
\newcommand{\M}{\ensuremath{\mathsf{M}}}
\newcommand{\tr}{\operatorname{tr}}
\newcommand{\Frob}{\operatorname{Frob}}
\newcommand{\norm}[1]{\Vert #1 \Vert}
\newtheorem{theorem}{Theorem}[section] 
\title[Computing Hasse--Witt matrices]
 {\vspace{-50pt}Computing Hasse--Witt matrices of hyperelliptic curves\\in average polynomial time} 
\author{David Harvey and Andrew V. Sutherland}
\begin{document}
\maketitle

\begin{abstract}
We present an efficient algorithm to compute the Hasse--Witt matrix of a hyperelliptic curve $C/\Q$ modulo all primes of good reduction up to a given bound $N$, based on the average polynomial-time algorithm recently introduced by Harvey. An implementation for hyperelliptic curves of genus 2 and 3 is more than an order of magnitude faster than alternative methods for $N = 2^{26}$.
\end{abstract}


\vspace{-24pt}

\section{Introduction}
Let $C/\Q$ be a smooth projective hyperelliptic curve of genus $g$ defined by an affine equation
\[
 y^2 = f(x) = \sum_{i=0}^{d} f_i x^i, \qquad f_i \in \Z,
\]
where $d=\deg f$ is either $2g+1$ or $2g+2$ (generically, $d=2g+2$).
If $C$ has good reduction at an odd prime $p$, the associated \emph{Hasse--Witt matrix} $W_p=[w_{ij}]$ is the $g\times g$ matrix over $\Z/p\Z$ with entries
\[
w_{ij} = f^{(p-1)/2}_{pi-j}\bmod p\qquad (1\le i,j\le g),
\]
where $f^n_k$ denotes the coefficient of $x^k$ in $f(x)^n$; see \cite{Gonzalez:HasseWitt,Yui:HasseWittMatrix}.
We have the identity 
\begin{equation}\label{eq:charpoly}
\chi(\lambda) \equiv (-1)^g\lambda^g\det(W_p - \lambda I)\bmod p,
\end{equation}
where $\chi(\lambda) \in \Z[\lambda]$ is the characteristic polynomial of the Frobenius endomorphism of the Jacobian of the reduction of $C$ at $p$; see  \cite{Manin:HasseWittMatrix}.
In particular, the Weil bounds imply that for $p>16g^2$ the trace of $W_p$ uniquely determines the trace of Frobenius, hence the number of points $p+1-\tr(\Frob_p)$ on the reduction of $C$ at $p$.

We say that a prime $p$ is \emph{admissible (for $C$)} if $p$ is odd, $C$ has good reduction at $p$, and $p$ does not divide $f_0$ or $f_d$ (the constant and leading coefficients of $f$). The goals of this paper are to give a fast algorithm for computing $W_p$ simultaneously for all admissible primes $p$ up to a given bound $N$, and to demonstrate the practicality of the algorithm for $g = 2$ and $g = 3$.
Applications include numerical investigations of the generalized  Sato--Tate conjecture \cite{FKRS:SatoTate,KS:SatoTate} and computing the $L$-series of $C$ \cite{KS:Lseries}.

The algorithm presented here is inspired by \cite{Harvey:HyperellipticPolytime}, which gives an algorithm to compute~$\chi(\lambda)$ (not just $\chi(\lambda) \bmod p$) for all primes $p\le N$ of good reduction, in the case that $d$ is odd (which implies that $C$ has a rational Weierstrass point).
The running time of that algorithm is $O(g^{8+\epsilon} N\log^{3+\epsilon} N)$; when averaged over primes $p\le N$, this is $O(g^{8+\epsilon} \log^{4+\epsilon} p)$, the first such result that is polynomial in both $g$ and $\log p$.
Critically, the exponent $4$ of $\log p$ does not depend on $g$, and it is already better than that of Schoof's algorithm \cite{Schoof:Polytime} in genus 1, which has an exponent of~$5$ when suitably implemented.\footnote{This assumes fast integer arithmetic is used, which we do throughout.  Under heuristic assumptions, the (probabilistic) SEA algorithm reduces the exponent to $4$, but for $g=1$ generic algorithms that run in $O(p^{1/4+\epsilon})$ time are superior within the feasible range of $p\le N$ in any case.}
Pila's generalization of Schoof's algorithm~\cite{Pila:Polytime} has an exponent of $8$ in genus $2$ (see \cite{GKS:g2Pila,GaudrySchost:g2Pila}), and Eric Schost has suggested (personal communication) that the exponent is $12$ in genus~$3$ (Pila's bound in \cite{Pila:Polytime} gives a much larger exponent).

For our implementation we focus on the cases $g \le 3$, where knowledge of $\chi(\lambda)\bmod p$ allows one to efficiently determine $\chi(\lambda)$ using a generic group algorithm, as described in \cite{KS:Lseries}. When $g = 3$, the time required to deduce $\chi(\lambda)$ from $\chi(\lambda) \bmod p$ is $O(p^{1/4+\epsilon})$; while this is exponential in $\log p$, for $p\le N$ it is actually negligible compared to the $O(\log^{4+\epsilon} p)$ average time to compute $\chi(\lambda) \bmod p$ using the method of this paper, within the feasible range of $N$ (say $N\le 2^{32}$).
We handle all hyperelliptic curves, not just those with a rational Weierstrass point, which in general will not be present.
We also introduce optimizations that improve the space complexity by a logarithmic factor, compared to \cite{Harvey:HyperellipticPolytime}, without increasing the running time; indeed, the running time is significantly reduced, as may be seen in Table~\ref{table:HWtimingsk} in \S\ref{sec:details}.

Asymptotically, we obtain the following theorem bounding the complexity of the algorithm \textsc{ComputeHasseWittMatrices}, which computes $W_p$ for all admissible $p \le N$ (see \S \ref{sec:ART} for the algorithm and a proof of the theorem). We denote by $\norm f$ the maximum of the absolute value of the coefficients of $f$, and by $\M(n)$ the time to multiply two $n$-bit integers. We may take $\M(n) = O(n\log n\log\log n)$, via \cite{SS:IntegerMultiplication}.

\vspace{-8pt} 

\begin{theorem}\label{thm:main}
Assume that $g=O(\log N)$. The running time of the algorithm \textsc{ComputeHasseWittMatrices} is
\[
 O(g^5 \M(N\log (\norm f N))\log N),
\]
and it uses
\[
 O\left(g^2 N\left(1+ \frac{\log \norm f}{\log N}\right)\right) 
\]
space.
\end{theorem}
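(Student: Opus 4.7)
\textbf{The plan} is to reduce the computation of each $W_p$ to the evaluation modulo~$p$ of a partial product of integer matrices whose entries depend on the index but not on~$p$, and then to invoke the accumulating remainder tree/forest framework of \cite{Harvey:HyperellipticPolytime}. I would organize the argument in four steps.

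\emph{Step 1: A $p$-independent transition matrix.} Set $y(x) := f(x)^{(p-1)/2}$. Differentiating the definition gives the ODE $f\,y' = \tfrac{p-1}{2}\,f'\,y$. Comparing coefficients of $x^{k-1}$ on both sides produces a linear recurrence of order $d = \deg f = O(g)$ on the coefficient sequence $y_k$; the coefficients of the recurrence are polynomials in~$k$ of bounded degree whose own coefficients are integer multiples of $f_0,\dots,f_d$ and of $\tfrac{p-1}{2}$. Because $\tfrac{p-1}{2}\equiv -1/2\pmod p$ for every odd~$p$, we may replace $\tfrac{p-1}{2}$ throughout by the formal value $-1/2$ and clear denominators, obtaining a $p$-independent $(d+1)\times(d+1)$ transition matrix $M(k)$ with integer entries of bit size $O(\log(\norm f\cdot k))$. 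One also needs that $f_0$ is invertible mod~$p$ in order to initialize cleanly; this is exactly why we restrict to admissible primes.

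\emph{Step 2: Extracting $W_p$ from partial products.} The entry $w_{ij} = f^{(p-1)/2}_{pi-j}$ appears as an appropriate coordinate of $M(pi-j)\cdots M(1)\,v_0 \bmod p$, where $v_0$ records the starting block of coefficients of $f(x)^{(p-1)/2}$ mod~$p$ (computable in $O(g^2\log p)$ bit operations per prime). Since the indices $pi-1,\dots,pi-g$ are contiguous for each fixed~$i$, all $g$ entries of the $i$-th row of $W_p$ can be read off during $g$ extra steps of the recurrence once the partial product up to position $pi-g$ has been formed. Thus computing $W_p$ requires $g$ checkpoints along the sequence of matrices, and across all primes $p\le N$ we need checkpoints at positions $pi$ for $1\le i\le g$ and $p$ admissible, all lying within $[1,gN]$.

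\emph{Step 3: Accumulating remainder tree, and running-time bookkeeping.} At this point the input to the algorithm is the $p$-independent sequence $M(1),M(2),\dots,M(K)$ with $K = O(gN)$, and for each admissible prime~$p$ a set of $g$ checkpoint indices at which we must output the partial product mod~$p$. This is the setting handled by \cite{Harvey:HyperellipticPolytime}: one builds a subproduct tree over the primes, a matching product tree over the matrices, and then propagates modular reductions down the tree. The tree has depth $O(\log N)$; at every level the total bit size of matrix entries is $O(K\log(\norm f K)) = O(gN\log(\norm f N))$, and each matrix multiplication costs $O(g^3)$ ring operations. Using $g = O(\log N)$ to absorb the $g$ inside $\M(\cdot)$, the basic cost per level is $O(g^3\,\M(N\log(\norm f N)))$; the additional factor of $g^2$ that leads to the stated exponent~$5$ comes from having to maintain $g$ distinct partial-product families (one per row index~$i$), which are intertwined into the forest but do not share their final segments. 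Multiplying by $O(\log N)$ levels yields the claimed time bound.

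\emph{Step 4: Space.} Only one root-to-leaf slice of each product tree need be held in memory at a time, and the final output consists of $g^2$ residues mod~$p$ per admissible prime, totaling $g^2\sum_{p\le N}\log p = O(g^2 N)$ bits. A careful level-by-level streaming argument bounds transient storage by $O(g^2 N(1 + \log\norm f / \log N))$, matching the statement.

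\textbf{Main obstacle.} The principal technical hurdle is the careful bookkeeping that makes the framework of \cite{Harvey:HyperellipticPolytime} applicable despite two $p$-dependent features: the parameter $\tfrac{p-1}{2}$ entering the recurrence, which is handled by the substitution $-1/2\bmod p$ together with a denominator-tracking argument for the powers of~$2$; and the fact that the checkpoint indices $pi$ are different for each~$p$, so the partial products to be extracted do not align. The latter forces a somewhat more elaborate forest structure (equivalently, $g$ parallel but mostly shared trees), and it is this complication that is ultimately responsible for the additional factors of~$g$ in the runtime beyond the naive $O(g^3\,\M(\cdot)\log N)$ one would get from a single straight-line product.
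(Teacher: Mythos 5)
Your Step 1 contains the fatal gap. You propose to replace the $p$-dependent parameter $\tfrac{p-1}{2}$ in the ``horizontal'' coefficient recurrence for $y_k = f^{(p-1)/2}_k$ by the formal value $-1/2$, on the grounds that $\tfrac{p-1}{2}\equiv -1/2\pmod p$. But this congruence only propagates to $y_k\pmod p$ for indices $k<p$. The $k$th coefficient of $(f/f_0)^m$ is a polynomial $g_k(m)\in\Q[m]$ whose denominator divides $k!$, and once $k\ge p$ the $p$-adic valuation of $k!$ is positive, so $g_k\bigl(\tfrac{p-1}{2}\bigr)\not\equiv g_k(-1/2)\pmod p$ in general. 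A concrete failure: take $f(x)=1+x$; then $y_p=\binom{(p-1)/2}{p}=0$, whereas your substituted sequence gives $\tilde y_p = \binom{-1/2}{p}\equiv -\tfrac{1}{2}\pmod p$ by Wilson's theorem. Since the Hasse--Witt entries are $w_{ij}=f^{(p-1)/2}_{pi-j}$ with $pi-j\ge p$ for every row $i\ge 2$, the substitution breaks down for all rows after the first, i.e.\ for every genus $g\ge 2$.

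This is exactly the obstruction the paper is engineered to avoid. It explicitly observes that the horizontal (Bostan--Gaudry--Schost) recurrences for the Cartier--Manin matrix have $p$-dependent coefficients and therefore cannot be batched over $p$, and instead builds a genuinely $p$-independent \emph{vertical} recurrence (``reduction towards zero''): for each row index $i$ one tracks a sliding window $v^{(i)}_n$ of $r\approx 2g$ coefficients of $f^n$ centered near $2in$, with the transition $v^{(i)}_{n+1}=v^{(i)}_n M^{(i)}_n/D^{(i)}_n$, and the parameter $n$ only ever runs up to $(p-1)/2<p$. Your remainder-tree/forest framework in Steps 3--4 is the right superstructure and matches the paper, but it has nothing valid to act on after Step 1. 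Two secondary points: your initial vector $v_0$ is itself $p$-dependent (it consists of coefficients of $f^{(p-1)/2}$), so even with a correct transition matrix you would need to normalize by $f_0^{(p-1)/2}$ and justify that; and the ``factor of $g^2$ from $g$ families'' claim in Step 3 is not substantiated --- in the paper the exponent $5$ arises because \textsc{ComputeHasseWittRows} is called $g$ times, each call using $r\times r$ matrices with $r=O(g)$ (contributing $g^3$) and entries of bit size $O(g\log(\norm f N))$ (contributing one more $g$ inside $\M$).
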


\vspace{-8pt} 

Assuming $\log \norm f$ grows no faster than $\log N$, the bounds in Theorem~\ref{thm:main} simplify to $O(g^5N\log^{3+\epsilon}N)$ time and $O(g^2N)$ space.

In practical terms, the new algorithm is substantially faster than previous methods.
We benchmarked our implementation against two of the fastest software packages available for these computations, as analyzed in \cite{KS:Lseries}: the \texttt{hypellfrob} \cite{hypellfrob} and  \texttt{smalljac} \cite{smalljac} software libraries. 
In genus 2 the new algorithm outperforms both libraries for $N\ge 2^{19}$, and is more than 10 times faster for $N=2^{26}$.
In genus 3 the new algorithm is faster across the board, and more than 20 times faster for $N=2^{26}$.
Key to achieving these performance improvements are a faster and more space-efficient algorithm for computing the accumulating remainder trees that play a crucial role in \cite{Harvey:HyperellipticPolytime}, and an optimized FFT implementation for multiplying integer matrices with very large coefficients.
\vspace{-16pt}

\section{Overview}\label{sec:OV}

Each row of the Hasse--Witt matrix $W_p$ of $C$ consists of of $g$ consecutive coefficients of $f^n$ reduced modulo $p$, where $n=(p-1)/2$.
The total size of all the polynomials~$f^n$ needed to compute $W_p$ for $p \leq N$ is $O(N^3\norm f)$ bits; this makes a na\"ive approach hopelessly inefficient. Two key optimizations are required to achieve a running time that is quasilinear in~$N$.

First, for a given row of $W_p$, we only require $g$ coefficients of each $f^n$.  In \S\ref{sec:RR} we define an $r$-dimensional row vector $v_n$, where $r \approx 2g$, consisting of $r$ consecutive coefficients of $f^n$, including the $g$ coefficients of interest. The coefficients of $f^{n+1}$ corresponding to $v_{n+1}$ are closely related to the coefficients of $f^n$ corresponding to $v_n$.
We use this to derive a linear recurrence $v_{n+1} = v_n T_n$, where $T_n$ is an explicit $r \times r$ transition matrix. The entries of $T_n$ lie in $\Q$, but not necessarily in $\Z$; this requires us to handle the denominators explicitly. These recurrence relations are analogous to the technique of ``reduction towards zero'' introduced in \cite{Harvey:HyperellipticPolytime}; the key point is that the coefficients of the recurrence are \emph{independent of $p$}. This is in contrast to the recurrence relations used to derive the Hasse--Witt matrix in \cite{BGS-recurrences}, whose coefficients do depend on $p$, and which are analogous to the ``horizontal reductions'' in \cite{Har-kedlaya} and \cite{Harvey:HyperellipticPolytime}.

Second, we only need to know the coefficients of each vector $v_n$ modulo $p=2n+1$.
The essential difficulty here is that the modulus is different for each $n$.
Following \cite{Harvey:HyperellipticPolytime}, we use an \emph{accumulating remainder tree} to circumvent this problem.
More precisely, in \S\ref{sec:ART} we give an algorithm \textsc{RemainderTree} that takes as input a sequence of integer matrices $A_0, \ldots, A_{b-2}$, a sequence of integer moduli $m_1, \ldots, m_{b-1}$, and an integer row vector $V$ (the ``initial condition''), and computes the reduced partial products (row vectors)
 \[ C_n := V A_0 \cdots A_{n-1} \bmod m_n, \]
simultaneously for all $0 \leq n < b$. The remarkable feature of this algorithm is that its complexity is quasilinear in $b$.

We may  apply \textsc{RemainderTree} to our situation in the following way. During the course of finding an explicit expression for $T_n$, we will write it as $T_n = M_n / D_n$ where $M_n$ is an integer matrix and $D_n$ is a nonzero integer. It turns out that for any sufficiently large admissible prime $p = 2n + 1$, the $p$-adic valuation of $D_0 \cdots D_{n-1}$ is at most $d$. Thus to obtain
 \[ v_n = v_0 M_0 \cdots M_{n-1} / D_0 \cdots D_{n-1} \]
modulo $p$, it suffices to compute
 \[ v_0 M_0 \cdots M_{n-1} \bmod p^{d+1}  \qquad\text{and}\qquad  D_0 \cdots D_{n-1} \bmod p^{d+1}. \]
We run \textsc{RemainderTree} twice, first with $V = v_0$ and $A_j = M_j$, and then with $V = 1$ and $A_j = D_j$ (regarding the $D_j$ as $1 \times 1$ matrices). In both cases we take the moduli $m_n = p^{d+1}$ if $p = 2n+1$ is an admissible prime, and let $m_n=1$ otherwise.

For $g \le 3$, we will show how to tweak this strategy to use the smaller moduli $m_n = p^g$.
This has a significant impact on the overall performance and memory consumption. 
We conjecture that one can always use $m_n=p^g$ (for $p$ sufficiently large compared to $g$), but we will not attempt to prove this here.

\vspace{-6pt}
\section{Recurrence relations}\label{sec:RR}

For technical reasons it will be convenient to distinguish between the cases $f_0 \neq 0$ and $f_0 = 0$ (the same distinction arises in \cite{Harvey:HyperellipticPolytime}). Let
 \[ r = \begin{cases} d & \text{if $f_0 \neq 0$}, \\ d-1 & \text{if $f_0 = 0$}. \end{cases} \]
For each $1 \leq i \leq g$, consider the sequence of vectors
 \[ v^{(i)}_n = [f^n_{2in+i-r}, \ldots, f^n_{2in+i-1}] \in \Z^r \qquad (n \geq 0). \]
For each admissible prime $p=2n+1$, the last $g$ entries of $v^{(i)}_n$ are, modulo $p$, precisely the entries of the $i$th row of the Hasse--Witt matrix $W_p$ (in reversed order).

The aim of this section is to develop a recurrence for the $v^{(i)}_n$.
For each $n \geq 0$, we will construct an $r \times r$ integer matrix $M^{(i)}_n$, and a nonzero integer $D^{(i)}_n$, such that
 \[ v^{(i)}_{n+1} = v^{(i)}_n M^{(i)}_n / D^{(i)}_n. \]
The entries of $M^{(i)}_n$, and $D^{(i)}_n$, turn out to be polynomials in $n$ and the coefficients of $f$,
which allows us to analyze the $p$-adic valuation of the partial products of the $D^{(i)}_n$.

The construction proceeds as follows. For any $n \geq 0$, the identities
\[
f^{n+1} = f f^n\qquad\text{and}\qquad(f^{n+1})' = (n+1)f'f^n
\]
imply the relations
\begin{align}\label{eq:rel1}
f^{n+1}_k &=\sum_{j=0}^d f_jf^n_{k-j},\\
kf^{n+1}_k &= (n+1)\sum_{j=1}^d jf_jf^n_{k-j}.\label{eq:rel2}
\end{align}
Multiplying \eqref{eq:rel1} by $k$ and subtracting \eqref{eq:rel2} yields the relation
\begin{equation}\label{eq:rel3}
\sum_{j=0}^d (nj - k + j)f_j f^n_{k-j} = 0
\end{equation}
among the coefficients of $f^n$.

Suppose we are in the case $f_0 \neq 0$, $r = d$. Solving \eqref{eq:rel3} for $f^n_k$ yields
\begin{equation}\label{eq:right}
k f_0 f^n_{k} = \sum_{j=1}^{d}(nj - k + j)f_j f^n_{k-j}.
\end{equation}
For $k \neq 0$, this expresses $f^n_k$ as a linear combination of $d$ consecutive coefficients of $f^n$ to the ``left'' of $f^n_k$.
On the other hand, replacing $k$ by $k + d$ and $j$ by $d - j$ in \eqref{eq:rel3} gives
\begin{equation}\label{eq:left}
(nd - k) f_d f^n_k = -\sum_{j=1}^d (n(d - j) - k - j) f_{d-j} f^n_{k+j}.
\end{equation}
For $k \neq nd$, this expresses $f^n_k$ as a linear combination of $d$ consecutive coefficients to the ``right'' of $f^n_k$. Now, suppose we are given as input
 \[ v^{(i)}_n = [f^n_{2in+i-d}, \ldots, f^n_{2in+i-1}]. \]
After $2i$ applications of \eqref{eq:right}, i.e., for $k = 2in + i, \ldots, 2in+3i-1$ (in that order), and $d - 2i$ applications of \eqref{eq:left}, i.e., for $k = 2in+i-d-1, \ldots, 2in+3i-2d$ (in that order), we have extended our knowledge of the coefficients of $f^n$ to the  vector 
 \[ [f^n_{2in+3i-2d}, \ldots, f^n_{2in+3i-1}]. \]
of length $2d$. From \eqref{eq:rel1} we then obtain
 \[ v^{(i)}_{n+1} = [f^{n+1}_{2in+3i-d}, \ldots, f^{n+1}_{2in+3i-1}]. \]

The above procedure defines a $d \times d$ transition matrix $T^{(i)}_n$ mapping $v_n^{(i)}$ to $v_{n+1}^{(i)}$, whose entries are rational functions in $\Q(n, f_0, \ldots, f_d)$. Denominators arise from the divisions by $kf_0$ and $(nd-k)f_d$ in the various applications of \eqref{eq:right} and \eqref{eq:left}. Each such divisor is a linear polynomial in $\Z[n]$ multiplied by either $f_0$ or $f_d$; thus the denominators of the entries of $T^{(i)}_n$ are polynomials in $\Z[n, f_0, f_d]$. We will take $D^{(i)}_n$ to be the least common denominator of the entries of $T^{(i)}_n$. Since there are $d$ applications of \eqref{eq:right} and \eqref{eq:left} altogether, the degree of $D^{(i)}_n$ with respect to $n$ is at most $d$ (it may be smaller due to cancellation).

The case $f_0 = 0$ with $r = d - 1$ is similar. We have $f_1 \neq 0$, because $f$ is assumed to be squarefree, and the analogues of \eqref{eq:right} and \eqref{eq:left} are
\begin{align}
       (n-k) f_1 f^n_k & = -\sum_{j=1}^{d-1} (n(j+1) - k + j) f_{j+1} f^n_{k-j}, \label{eq:right2} \\
    (nd - k) f_d f^n_k & = -\sum_{j=1}^{d-1} (n(d-j) - k - j) f_{d-j} f^n_{k+j}, \label{eq:left2}
\end{align}
which express $f^n_k$ in terms of $d - 1$ consecutive coefficients to the left, or right, of $f^n_k$. Given
 \[ v^{(i)}_n = [f^n_{2in+i-d+1}, \ldots, f^n_{2in+i-1}], \]
we use these relations to extend $v_n^{(i)}$ to the vector $[f^n_{2in+3i-2d+1}, \ldots, f^n_{2in+3i-1}]$ of length $2d-1$, from which we obtain $v^{(i)}_{n+1}$ from \eqref{eq:rel1} as above.

In the subsections that follow we carry out the above procedure explicitly for the specific cases that arise when $g\le 3$.

\subsection{Genus $1$, quartic model}
Suppose that $C/\Q$ has genus $1$.
If $C$ has a rational point, then $C$ is an elliptic curve and can be put in Weierstrass form $y^2=f(x)$ with $f$ cubic, but we first consider the generic case where this need not hold.
So let $f(x) = f_4 x^4 + f_3 x^3 + f_2 x^2 + f_1 x + f_0$ with $f_0f_4 \neq 0$; then $r = d = 4$.
Since the only relevant value of $i$ is $1$, we omit the superscripts on $v^{(1)}_n$, $M^{(1)}_n$, $D^{(1)}_n$.

We wish to construct a linear recurrence that expresses the vector
 \[ v_{n+1} = [f^{n+1}_{2n-1}, f^{n+1}_{2n}, f^{n+1}_{2n+1}, f^{n+1}_{2n+2}] \in \Z^4 \]
in terms of the vector
 \[ v_n = [f^n_{2n-3}, f^n_{2n-2}, f^n_{2n-1}, f^n_{2n}] \in \Z^4; \]
that is, we want a $4\times 4$ integer matrix $M_n$ and a nonzero integer $D_n$ such that
 \[ v_{n+1} = v_n M_n / D_n. \]
For each odd prime $p=2n+1$,  the Hasse--Witt matrix $W_p$ consists of just the single entry $f^n_{2n} \bmod p$, which is the last entry of $v_n\bmod p$.

We start by extending extending $v_n$ ``rightwards'', using \eqref{eq:right} with $k = 2n+1$.
This yields
 \[ (2n+1) f_0 f^n_{2n+1} = (2n+3)f_4 f^n_{2n-3} + (n+2)f_3 f^n_{2n-2} + f_2 f^n_{2n-1} - n f_1 f^n_{2n}. \]
Using \eqref{eq:right} again with $k = 2n+2$, we get
 \[ (2n + 2) f_0 f^n_{2n+2} = (2n+2) f_4 f^n_{2n-2} + (n+1) f_3 f^n_{2n-1} - (n+1) f_1 f^n_{2n+1}. \]
Combining these equations yields
\begin{align*}
 2(2n+1)f_0^2 f^n_{2n+2}
  &= -(2n+3)f_1 f_4 f^n_{2n-3} \\
  & \phantom{=}\  + \big(2(2n+1) f_0 f_4 - (n+2)f_1 f_3\big) f^n_{2n-2} \\
  & \phantom{=}\ + \big((2n+1) f_0 f_3 - f_1 f_2\big) f^n_{2n-1} \\
  & \phantom{=}\ + n f_1^2 f^n_{2n}.
\end{align*}

Next we extend $v_n$ ``leftwards'' by applying \eqref{eq:left} with $k = 2n-4$, obtaining
 \[ (2n+4) f_4 f^n_{2n-4} = -(n+3) f_3 f^n_{2n-3} - 2f_2 f^n_{2n-2} + (n-1) f_1 f^n_{2n-1} + 2n f_0 f^n_{2n}. \]
With $k = 2n - 5$ we get
 \[ (2n+5) f_4 f^n_{2n-5} = -(n+4) f_3 f^n_{2n-4} - 3f_2 f^n_{2n-3} + (n-2) f_1 f^n_{2n-2} + (2n-1) f_0 f^n_{2n-1}, \]
and therefore
\begin{align*}
(2n+5)(2n+4) f_4^2 f^n_{2n-5}
  & = \big( (n+3)(n+4) f_3^2 - 3(2n+4) f_2 f_4 \big) f^n_{2n-3} \\
  & \phantom{=}\ + \big( 2(n+4) f_2 f_3 + (n-2)(2n+4) f_1 f_4 \big) f^n_{2n-2} \\
  & \phantom{=}\ + \big(\mathop-(n-1)(n+4) f_1 f_3 + (2n-1)(2n+4) f_0 f_4 \big) f^n_{2n-1} \\
  & \phantom{=}\ - 2n(n+4) f_0 f_3 f^n_{2n}.
\end{align*}

We have expressions for $f^n_{2n-5}, \ldots, f^n_{2n+2}$ in terms of $f^n_{2n-3}, \ldots, f^n_{2n}$, and we obtain $v_{n+1}$ via
\begin{align*}
 f^{n+1}_{2n-1} & = f_4 f^n_{2n-5} + \cdots + f_0 f^n_{2n-1}, \\
& \  \, \vdots \\
 f^{n+1}_{2n+2} & = f_4 f^n_{2n-2} + \cdots + f_0 f^n_{2n+2}.
\end{align*}
After some algebraic manipulation we obtain the matrix
\[
M_n = \left[
\begin{array}{rrrr}
   (-(n+3) f_3^2 + 4(n+2)f_2 f_4) J_1    & f_3 J_2     & 4 f_4 J_3   & (2n+3) f_1 f_4 J_4 \\
   (-2 f_2 f_3 + 6(n+2) f_1 f_4) J_1     & 2 f_2 J_2   & 3 f_3 J_3   & (4(2n+1) f_0 f_4 + (n+2) f_1 f_3) J_4 \\
   ((n-1) f_1 f_3 + 8(n+2) f_0 f_4) J_1  & 3 f_1 J_2   & 2 f_2 J_3   & (3(2n+1) f_0 f_3 + f_1 f_2) J_4 \\
       2n f_0 f_3 J_1                    & 4 f_0 J_2   & f_1 J_3     & (2(2n+1) f_0 f_2 - n f_1^2) J_4
\end{array}
\right],
\]
where
\begin{align*}
 J_1 & = (n+1)(2n+1)f_0, \\
 J_2 & = (n+1)(2n+1)(2n+5) f_0 f_4, \\
 J_3 & = 2(n+1)(n+2)(2n+5) f_0 f_4, \\
 J_4 & = (n+2)(2n+5) f_4,
\end{align*}
and the denominator
 \[ D_n = 2(n+2)(2n+1)(2n+5) f_0 f_4. \]
 
Recall that $v_n = v_0 M_0 \cdots M_{n-1} / (D_0 \cdots D_{n-1})$. For each admissible prime $p = 2n+1 \geq 5$, the $p$-adic valuation of $D_0 \cdots D_{n-1}$ is exactly $1$, since $p$ divides $D_{n-2} = 2n(2n-3)(2n+1) f_0 f_4$ exactly once, and $p$ does not divide $D_j$ for $j = n-1$ or any $0 \leq j \leq n - 3$. We may thus compute $v_n \bmod p$ as
 \[ \left(\frac{v_0 M_0 \cdots M_{n-1} \bmod p^2}{D_0 \cdots D_{n-1} \bmod p^2}\right)\bmod p. \]

With additional care it is possible to perform the bulk of the computation working modulo~$p$ rather than $p^2$. As noted above, $D_0 \cdots D_{n-3}$ is a $p$-adic unit, and a direct calculation shows that the entries of the last column of $M_{n-2} M_{n-1}$ are divisible by $2n + 1$. Let $U_n$ be the last column of $M_{n-2} M_{n-1} / D_{n-2} D_{n-1}$. Then $U_n$ is $p$-integral for $p = 2n + 1$, and we may compute the last entry of $v_n \bmod p$, i.e., the lone entry of the Hasse--Witt matrix $W_p$, as
 \[ \left(\left(\frac{v_0 M_0 \cdots M_{n-3} \bmod p}{D_0 \cdots D_{n-3} \bmod p}\right) U_n \right) \bmod p. \]

\begin{remark}
Returning briefly to the general case, we can now see why it always suffices to work with moduli $m_n = p^{d+1}$, for sufficiently large admissible $p$. The denominator $D_n$ always has the form $D_n = C f_0^\alpha f_d^\beta \prod_{i=1}^e (a_i n + b_i)$, where $C, a_i, b_i \in \Z$ and $\alpha$, $\beta$ and $e$ are non-negative integers with $\alpha + \beta \leq d$ and $e \leq d$. We may assume that $(a_i, b_i) = 1$ for all $i$. If $p$ is larger than every prime divisor of $a_i$, we see that $a_i n + b_i$ is divisible by $p$ if and only if $n = -b_i / a_i \bmod p$, and this occurs for at most one value of $n$ in the interval $0 \leq n < (p-1)/2$. Moreover for large enough $p$ we see that $a_i n + b_i$ cannot be divisible by $p^2$ for such $n$. Thus for all sufficiently large admissible primes $p = 2n + 1$, we find that $D_0 \cdots D_{n-1}$ has $p$-adic valuation at most $d$.
\end{remark}

\begin{remark}
One can make $f_3=0$ by replacing $x$ with $x-f_3/(4f_4)$ and $y$ with $y/(16f_4^2)$ and then clearing denominators. This has the advantage that a factor of $f_4$ cancels in the above formulae for $M_n$ and $D_n$, but it will also tend to increase the size of the other coefficients. In general, one can always make $f_{d-1}=0$ with a similar substitution, and when $d$ is even this allows us to remove a power of $f_d$ from $D_n$ and the entries of $M_n$.
\end{remark}

When $f_0 = 0$ one can follow the procedure above, using \eqref{eq:right2} and \eqref{eq:left2} in place of \eqref{eq:right} and \eqref{eq:left}; alternatively, one may switch to a cubic model via the substitution $x = 1/u$, $y = v/u^2$, which is discussed in the next section. Both methods lead to essentially the same formulae.

\subsection{Genus $1$, cubic model}
We now consider the case $g=1$ with $f(x) = f_3 x^3 + f_2 x^2 + f_1 x + f_0$ and $d = 3$.
Assuming $f_0\ne 0$, we obtain the $3\times 3$ transition matrix
\[
M_n = 
\left[
\begin{array}{rrr}
2(n+1)(2n+1)f_0f_2 & 6(n+1)(n+3)f_0f_3 & (n+3)(n+2)f_1f_3\\
4(n+1)(2n+1)f_0f_1 & 4(n+1)(n+3)f_0f_2 & (n+3)(3(2n+1)f_0f_3+f_1f_2)\\
6(n+1)(2n+1)f_0^2 & 2(n+1)(n+3)f_0f_1 & (n+3)(2(2n+1)f_0f_2-nf_1^2)
\end{array}
\right]
\]
with denominator
\[ D_n = 2(n+3)(2n+1)f_0. \]
For all admissible primes $p=2n+1 \geq 5$, the partial product $D_0 \cdots D_{n-1}$ is prime to $p$.

\begin{remark}
In the cubic case one can make $f_3=1$ and $f_2=0$ with a suitable substitution; this simplifies the formulae but may increase the size of $f_0$ and $f_1$.  If the cubic $f(x)$ has a rational root, one can make $f_0=0$ by translating the root to zero (in which case $f_2$ will typically be nonzero).  This is usually well worth doing, since it reduces the dimension of $M_n$ from 3 to 2 (see below).
Similar remarks apply whenever $d$ is odd.
\end{remark}

When $f_0=0$ we have $y^2 = f_3 x^3 + f_2 x^2 + f_1 x$ and the $2\times 2$ transition matrix
\[
M_n =
\left[
\begin{array}{rr}
(n+1)f_2 & 2(n+2)f_3\\
2(n+1)f_1 & (n+2)f_2
\end{array}
\right]
\]
with denominator
\[ D_n = n+2. \]
For all admissible primes $p=2n+1$ the partial product $D_0 \cdots D_{n-1}$ is prime to $p$. 

\subsection{Genus $2$}
The computations in genus 2 are similar, except now each Hasse--Witt matrix has two rows, which we obtain by computing $v_n^{(i)}$ for $i=1,2$.
For the sake of brevity, we omit the details and list only the denominators $D^{(i)}_n$; a Sage \cite{sage-6.0} script for generating the transition matrices $M^{(i)}_n$ is available at \cite{HS:HasseWittWorksheet}.

For $i=1$ we get the denominators
\[
D_n^{(1)} =\begin{cases}
8(n+2)(2n+1)(2n+3)(4n+7)(4n+9)f_0f_6^3&\text{if $d=6,f_0\ne 0$},\\
6(n+2)(2n+1)(3n+5)(3n+7)f_0f_5^2&\text{if $d=5,f_0\ne 0$},\\
3(n+2)(3n+4)(3n+5)f_5^2&\text{if $d=5,f_0=0$}.
\end{cases}
\]
In the case $d=6$, one verifies that the last two columns of $M^{(1)}_{n-1} / D^{(1)}_{n-1}$ are $p$-integral for $p = 2n+1$, and that $D^{(1)}_0 \cdots D^{(1)}_{n-2}$ is a $p$-adic unit except possibly for a single factor of $p$ contributed by $4m+7$ when $m = (n-3)/2$ or by $4m+9$ when $m = (n-4)/2$ (at most one of these occurs for each $p$). Thus the desired row of the Hasse--Witt matrix $W_p$ may be computed as the last two entries of
 \[ \left(\left(\frac{v_0 M^{(1)}_0 \cdots M^{(1)}_{n-2} \bmod p^2}{D^{(1)}_0 \cdots D^{(1)}_{n-2} \bmod p^2}\right) \frac{M^{(1)}_{n-1}}{D^{(1)}_{n-1}}\right)\bmod p. \]
Similar observations apply to both of the $d = 5$ cases, and again one finds that it suffices to work with the moduli $m_n = p^2$ (we omit the details).

The denominators for $i=2$ are
\[
D_n^{(2)} =\begin{cases}
8(n+3)(2n+1)(2n+5)(4n+3)(4n+5)f_0^3f_6&\text{if $d=6,f_0\ne 0$},\\
8(n+4)(2n+1)(4n+3)(4n+5)f_0^3&\text{if $d=5,f_0\ne 0$},\\
3(n+3)(3n+2)(3n+4)f_1^2&\text{if $d=5,f_0=0$}.
\end{cases}
\]
As above, in all three cases one can arrange to use the moduli $m_n=p^2$.

\subsection{Genus $3$}

For $i=1$ we get the denominators
\[
D^{(1)}_n = \begin{cases}
72(n+2)(2n+1)(2n+3)(3n+4)(3n+5)(6n+11)(6n+13)f_0f_8^5&\text{if $d=8,f_0\ne 0$},\\
10(n+2)(2n+1)(5n+7)(5n+8)(5n+9)(5n+11)f_0f_7^4&\text{if $d=7,f_0\ne 0$},\\
5(n+2)(5n+6)(5n+7)(5n+8)(5n+9)f_7^4&\text{if $d=7,f_0=0$}.
\end{cases}
\]
For $i=2$ the denominators are
\[
D^{(2)}_n = \begin{cases}
8(n+2)(2n+1)(2n+5)(4n+3)(4n+5)(4n+7)(4n+9)f_0^3f_8^3&\text{if $d=8,f_0\ne 0$},\\
24(n+2)(2n+1)(3n+7)(3n+8)(4n+3)(4n+5)f_0^3f_7^2&\text{if $d=7,f_0\ne 0$},\\
3(n+2)(3n+2)(3n+4)(3n+5)(3n+7)f_1^2f_7^2&\text{if $d=7,f_0=0$},
\end{cases}
\]
and for $i = 3$ they are
\[
D^{(3)}_n = \begin{cases}
72(n+3)(2n+1)(2n+7)(3n+2)(3n+4)(6n+5)(6n+7)f_0^5f_8&\text{if $d=8,f_0\ne 0$},\\
72(n+5)(2n+1)(3n+2)(3n+4)(6n+5)(6n+7)f_0^5&\text{if $d=7,f_0\ne 0$},\\
5(n+4)(5n+3)(5n+4)(5n+6)(5n+7)f_1^4&\text{if $d=7,f_0=0$}.
\end{cases}
\]
In all three cases it is not difficult to show that by pulling out at most the last three factors from $D_0 \cdots D_{n-1}$, it suffices to compute the partial products modulo $m_n=p^3$, where $p=2n+1$.

\vspace{-6pt}
\section{Accumulating remainder trees}\label{sec:ART}

Given a sequence of $r\times r$ integer matrices $A_0,\ldots,A_{b-2}$, an $r$-dimensional integer row vector~$V$, and a sequence of positive integer moduli $m_1,\ldots,m_{b-1}$, we wish to compute the sequence of reduced row vectors $C_1,\ldots,C_{b-1}$, where
\[
C_n := V A_0 \cdots A_{n-1} \bmod m_n.
\]
For convenience, we define $m_0=1$, so $C_0$ is the zero vector, and we let $A_{b-1}$ be the identity matrix.
We also make the simplifying assumption that the bound $b=2^\ell$ is a power of two, although this is not necessary.
In terms of the prime bound $N$ of the previous sections, we use $b=N/2$, which can be viewed as a bound on $n=(p-1)/2$.

As in \cite[\S 3]{Harvey:HyperellipticPolytime}, we work with complete binary trees of depth $\ell$ with nodes indexed by pairs $(i,j)$ with $0\le i\le \ell$ and $0\le j < 2^i$. For each node we define
\begin{align}
m_{i,j} &:= m_{j 2^{\ell-i}} m_{j 2^{\ell-i} + 1} \cdots m_{(j+1)2^{\ell-i}-1}, \notag \\
A_{i,j} &:= A_{j 2^{\ell-i}} A_{j 2^{\ell-i} + 1} \cdots A_{(j+1)2^{\ell-i}-1}, \label{eq:definetrees} \\
C_{i,j} &:= V A_{i,0} \cdots A_{i,j-1}  \bmod m_{i,j}. \notag
\end{align}
The values $m_{i,j}$ and $A_{i,j}$ may be viewed as nodes in a \emph{product tree}, in which each node is the product of its children,
with leaves $m_j=m_{\ell,j}$ and $A_j=A_{\ell,j}$, for $0 \leq j < b$.
Each vector $C_{i,j}$ is the product of $V$ and all the matrices $A_{i,k}$ that are nodes on the same level and to the left of $A_{i,j}$, reduced modulo $m_{i,j}$.
To compute the vectors $C_j=C_{\ell,j}$, we use the following algorithm.
\bigskip

\noindent
\textbf{Algorithm} \textsc{RemainderTree}
\vspace{2pt}

\noindent
Given $V, A_0,\ldots,A_{b-1}$ and $m_0,\ldots,m_{b-1}$, with $b=2^\ell$, compute $m_{i,j}, A_{i,j}$, and $C_{i,j}$ as follows:
\begin{enumerate}[1.]
\item Set $m_{\ell,j}=m_j$ and $A_{\ell,j}=A_j$, for $0\le j < b$.
\item For $i$ from $\ell-1$ down to 1:\\
\phantom{For} For $0\le j < 2^i$, set $m_{i,j}=m_{i+1,2j}m_{i+1,2j+1}$ and $A_{i,j}=A_{i+1,2j}A_{i+1,2j+1}$.
\item Set $C_{0,0}=V \bmod m_{0,0}$ and then for $i$ from 1 to $\ell$:\\
\phantom{Set }For $0 \leq j < 2^i$ set $C_{i,j} =
\begin{cases}
C_{i-1,\lfloor j/2\rfloor}\bmod m_{i,j}\qquad&\text{if $j$ is even,}\\ 
C_{i-1,\lfloor j/2\rfloor}A_{i,j-1}\bmod m_{i,j}&\text{if $j$ is odd.}\\ 
\end{cases}$
\end{enumerate}
\bigskip

To illustrate the algorithm, let us compute $(p-1)!\bmod p$ for the odd primes $p < 15$; this does not correspond to the computation of a Hasse--Witt matrix, but this makes no difference as far as the \textsc{RemainderTree} algorithm is concerned.
We use odd moduli $m_n=2n+1$ for $0\le n < 8$, except that we set the composite moduli $m_4$ and $m_7$ to 1, and we use $1\times 1$ matrices $A_n = [(2n+1)(2n+2)]$ for $0\le n < 7$, and let $A_7=[1]$ and $V = [1]$.
The trees $m_{i,j}$, $A_{i,j}$, and $C_{i,j}$ computed by the \textsc{RemainderTree} algorithm are depicted below.
\smallskip
\begin{center}
\begin{tikzpicture}[scale=0.5]
\small
\node at (-8.5,6) {};
\node at (0,6) {};
\node at (8.5,6) {1};

\node at (-10.5,4) {105};
\node at (-6.5,4) {143};
\node at (-1.8,4) {40320};
\node at (1.8,4) {2162160};
\node at (6.5,4) {1};
\node at (10.4,4) {137};

\node at (-11.5,2) {3};
\node at (-9.5,2) {35};
\node at (-7.5,2) {11};
\node at (-5.6,2) {13};
\node at (-3,2) {24};
\node at (-1,2) {1680};
\node at (1,2) {11880};
\node at (2.9,2) {182};
\node at (5.5,2) {1};
\node at (7.5,2) {24};
\node at (9.5,2) {5};
\node at (11.5,2) {12};

\node at (-12,0) {1};
\node at (-11,0) {3};
\node at (-10,0) {5};
\node at (-9,0) {7};
\node at (-8,0) {1};
\node at (-7,0) {11};
\node at (-6,0) {13};
\node at (-5,0) {1};

\node at (-3.5,0) {2};
\node at (-2.5,0) {12};
\node at (-1.5,0) {30};
\node at (-0.5,0) {56};
\node at (0.5,0) {90};
\node at (1.5,0) {132};
\node at (2.6,0) {182};
\node at (3.5,0) {1};
\node at (5,0) {0};
\node at (6,0) {2};
\node at (7,0) {4};
\node at (8,0) {6};
\node at (9,0) {0};
\node at (10,0) {10};
\node at (11,0) {12};
\node at (12,0) {0};

\draw (-8.5,5.5) -- (-10.5,4.5);
\draw (-8.5,5.5) -- (-6.5,4.5);
\draw (0,5.5) -- (-2,4.5);
\draw (0,5.5) -- (2,4.5);
\draw (8.5,5.5) -- (10.5,4.5);
\draw (8.5,5.5) -- (6.5,4.5);

\draw (-10.5,3.5) -- (-11.5,2.5);
\draw (-10.5,3.5) -- (-9.5,2.5);
\draw (-6.5,3.5) -- (-7.5,2.5);
\draw (-6.5,3.5) -- (-5.5,2.5);
\draw (-2,3.5) -- (-3,2.5);
\draw (-2,3.5) -- (-1,2.5);
\draw (2,3.5) -- (1,2.5);
\draw (2,3.5) -- (3,2.5);
\draw (6.5,3.5) -- (5.5,2.5);
\draw (6.5,3.5) -- (7.5,2.5);
\draw (10.5,3.5) -- (9.5,2.5);
\draw (10.5,3.5) -- (11.5,2.5);

\draw (-11.5,1.5) -- (-12,0.5);
\draw (-11.5,1.5) -- (-11,0.5);
\draw (-9.5,1.5) -- (-10,0.5);
\draw (-9.5,1.5) -- (-9,0.5);
\draw (-7.5,1.5) -- (-8,0.5);
\draw (-7.5,1.5) -- (-7,0.5);
\draw (-5.5,1.5) -- (-6,0.5);
\draw (-5.5,1.5) -- (-5,0.5);
\draw (-3,1.5) -- (-3.5,0.5);
\draw (-3,1.5) -- (-2.5,0.5);
\draw (-1,1.5) -- (-1.5,0.5);
\draw (-1,1.5) -- (-0.5,0.5);
\draw (1,1.5) -- (0.5,0.5);
\draw (1,1.5) -- (1.5,0.5);
\draw (3,1.5) -- (2.5,0.5);
\draw (3,1.5) -- (3.5,0.5);
\draw (11.5,1.5) -- (12,0.5);
\draw (11.5,1.5) -- (11,0.5);
\draw (9.5,1.5) -- (10,0.5);
\draw (9.5,1.5) -- (9,0.5);
\draw (7.5,1.5) -- (8,0.5);
\draw (7.5,1.5) -- (7,0.5);
\draw (5.5,1.5) -- (6,0.5);
\draw (5.5,1.5) -- (5,0.5);

\node at (-8.5,-1) {$m_{i,j}$};
\node at (0,-1) {$A_{i,j}$};
\node at (8.5,-1) {$C_{i,j}$};
\end{tikzpicture}
\vspace{-2pt}
\end{center}

\begin{theorem}\label{thm:RTcomplexity}
Let $B$ be an upper bound on the bit-size of $\prod_{j=0}^{b-1}m_j$, let $B'$ be an upper bound on the bit-size of any entry of $V$,
let $h$ be an upper bound on the bit-size of any $m_0,\ldots,m_{b-1}$ and any entry in $A_0,\ldots,A_{b-1}$, and assume that $\log r = O(h)$.
The running time of the \textsc{RemainderTree} algorithm is
\[
O(r^3 \M(B+bh)\log b + r\M(B')),
\]
and its space complexity is
\[
O(r^2 (B+bh) \log b + rB').
\]
\end{theorem}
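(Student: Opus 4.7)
The plan is to analyze each of the three phases of \textsc{RemainderTree} in turn, first establishing bit-size bounds at each level of the product and remainder trees, then using super-additivity of $\M$ (which holds because $\M(n) = \Omega(n)$) to collapse per-level cost sums into a single $\M$ call, and finally summing over the $\ell = \log b$ levels.

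I would first establish the size bounds. In the $A$-tree, a node $A_{i,j}$ at level $i$ is a product of $2^{\ell-i}$ leaf matrices whose entries have bit-size at most $h$, so its entries have bit-size $O(2^{\ell-i}h + (\ell - i)\log r) = O(2^{\ell-i}h)$ under the hypothesis $\log r = O(h)$; summing $r^2$ entries per matrix over $2^i$ matrices, the total bit-size at level $i$ is $O(r^2 bh)$. For the $m$-tree, the identity $\sum_j \log m_{i,j} = \log m_{0,0} \leq B$ yields a per-level total of $O(B + b)$ after allowing one bit per node for ceilings. For the $C$-tree, each $C_{i,j}$ is stored modulo $m_{i,j}$ and has $r$ entries, giving $O(r(B + b))$ bits per level.

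For the time bound, phase~2 performs $2^i$ matrix multiplications at level $i+1 \to i$ of the $A$-tree, each costing $O(r^3 \M(2^{\ell-i-1}h))$; super-additivity collapses these into $O(r^3 \M(bh))$ per level, hence $O(r^3 \M(bh)\log b)$ overall (the analogous $m$-tree work costs $O(\M(B+bh)\log b)$, which is absorbed). Phase~3 forms each $C_{i,j}$ by at most one vector--matrix product costing $O(r^2 \M(b_{i-1,\lfloor j/2\rfloor} + 2^{\ell-i}h))$, followed by a reduction modulo $m_{i,j}$ of comparable cost; super-additivity again gives $O(r^2 \M(B+bh))$ per level, hence $O(r^2 \M(B+bh)\log b)$ overall. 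The initial reduction $V \bmod m_{0,0}$ contributes $O(r \M(B'))$. Summing yields the stated $O(r^3 \M(B+bh)\log b + r\M(B'))$.

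For space, summing the per-level tree totals over the $\log b$ levels gives $O(r^2 bh \log b)$ for $A$, $O((B+b)\log b)$ for $m$, and $O(r(B+b)\log b)$ for $C$, all subsumed in $O(r^2(B+bh)\log b)$; storing $V$ contributes the additional $O(rB')$ term. The main technical obstacles will be the entry-growth bound in the $A$-tree---where the $\log r$ correction per matrix multiplication must be absorbed by the hypothesis $\log r = O(h)$ so the per-level total remains $O(r^2 bh)$---and the careful application of super-additivity to collapse sibling sums of $\M$ values into a single $\M$ at the aggregated bit-size; once these are in place, the rest is a straightforward geometric-series estimate over the $\log b$ levels.
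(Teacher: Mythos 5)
Your proof is correct and takes essentially the same approach as the paper's terse proof: per-level bit-size bounds for the $m$-, $A$-, and $C$-trees, super-additivity of $\M$ to collapse sibling costs into a single call, and a sum over $\log b$ levels (the paper simply states the per-level size bounds and defers details to \cite{Harvey:WilsonPrimes} and \cite{Harvey:HyperellipticPolytime}). One small slip: the $A$-tree entry bit-size at level $i$ should be $O(2^{\ell-i}(h+\log r))$, not $O(2^{\ell-i}h + (\ell-i)\log r)$, since the $\log r$ overhead incurred at each matrix multiplication itself doubles as you ascend the tree rather than accumulating once per level; the hypothesis $\log r = O(h)$ absorbs it either way, so your conclusion is unaffected.
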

\begin{proof}
There are $O(B)$ bits at each level of the $m_{i,j}$ tree. For the $A_{i,j}$ tree, observe that the entries of any product $A_{j_1} \cdots A_{j_2-1}$ have bit-size $O((j_2-j_1)h + \log r)$; thus there are $O(bh)$ bits at each level of the $A_{i,j}$ tree. These estimates account for the main terms in the time and space bounds; for more details see the proofs of \cite[Thm.\ 1.1]{Harvey:WilsonPrimes} or  \cite[Prop.\ 4]{Harvey:HyperellipticPolytime}.
We assume classical matrix multiplication throughout, with complexity $O(r^3)$.
The terms involving $B'$ cover any additional cost due to the initial reduction of $V$ modulo $m_{0,0}$.
\end{proof}

\subsection{A fast space-efficient remainder tree algorithm}
The algorithm given in the previous section uses more space than is necessary.
We now describe a more space-efficient approach that is also faster by a significant constant factor.
As above, we assume $b=2^\ell$ is a power of two.
Our strategy is to pick a parameter $k$, and rather than computing a single remainder tree, separately compute the $2^k$ subtrees corresponding to the bottom $\ell - k$ layers of the original tree, each of which has height $\ell-k$ and $t = 2^{\ell-k}$ leaves.

For $0 \leq s < 2^k$, we define the $s$th tree as follows. Let
\begin{align*}
 m^s_j  &:= m_{st+j} \quad (0 \leq j < t), \\
 A^s_j  &:= A_{st+j} \quad (0 \leq j < t), \\
 V^s    &:= V A_0 \cdots A_{st-1} \bmod m_{st} \cdots m_{b-1}.
\end{align*}
For $0 \leq i \leq \ell - k$ and $0 \leq j < 2^i$ we define $m^s_{i,j}$, $A^s_{i,j}$ and $C^s_{i,j}$ in terms of the above data, in direct analogy with \eqref{eq:definetrees}.

We then have $m^s_{i,j} = m_{i+k, j + 2^i s}$ and $A^s_{i,j} = A_{i+k, j + 2^i s}$; in other words, the $m^s_{i,j}$ and $A^s_{i,j}$ trees are identical to the corresponding subtrees of the original $m_{i,j}$ and $A_{i,j}$ trees rooted at the node $(k,s)$. The same is true for the $C^s_{i,j}$ tree, namely, we have $C^s_{i,j} = C_{i+k, j + 2^i s}$. To see this, observe that
 \[ V^s = V A_{k,0} \cdots A_{k,s-1} \bmod m_{k,s} \cdots m_{k,2^k-1}, \]
and $m^s_{0,0} = m_{k,s}$. Therefore
 \[ C^s_{0,0} = V^s \bmod m^s_{0,0} = V A_{k,0} \cdots A_{k,s-1} \bmod m_{k,s} = C_{k,s}. \]
For the the remaining nodes, the claim $C^s_{i,j} = C_{i+k, j + 2^i s}$ follows by working downwards from the root of the $C^s$ tree.

The idea of the \textsc{RemainderForest} algorithm below is to compute each subtree separately, allowing us to reuse space, and to keep track of the vector $V^s$ and the moduli product
 \[ Y^s := m_{st} \cdots m_{b-1} \]
as we proceed from one subtree to the next. The \textsc{RemainderTree} algorithm may be viewed as a special case of the \textsc{RemainderForest} algorithm, using $k = 0$.
\bigskip

\noindent
\textbf{Algorithm} \textsc{RemainderForest}
\vspace{2pt}

\noindent
Given $V,A_0,\ldots,A_{b-1}$ and $m_0,\ldots,m_{b-1}$, with $b=2^\ell$, and an integer $k\in [0,\ell]$, compute $C_0,\ldots,C_{b-1}$ as follows:
\begin{enumerate}[1.]
\item Set $Y^0 \leftarrow m_0 \cdots m_{b-1}$ and $V^0 \leftarrow V \bmod Y^0$, and let $t=2^{\ell-k}$.
\item For $s$ from $0$ to $2^k-1$:
\begin{enumerate}[a.]
\item Call \textsc{RemainderTree} with inputs $V^s$, $A_{st}, \ldots, A_{(s+1)t-1}$, and $m_{st}, \ldots, m_{(s+1)t-1}$\\
to compute trees $m^s$, $A^s$, $C^s$.
\item Set $Y^{s+1} \leftarrow Y^s/m^s_{0,0}$ and $V^{s+1} \leftarrow V^s A^s_{0,0}\bmod Y^{s+1}$.
\item Output the values $C_{st+j}=C^s_{e,j}$ for $0\le j < t$. 
\item Discard $Y^s$, $V^s$, and the trees $m^s$, $A^s$, $C^s$.
\end{enumerate}
\end{enumerate}
\bigskip

\noindent
We now bound the complexity of the \textsc{RemainderForest} algorithm.
We do not include the size of the input in our space bound; in the context of computing Hasse--Witt matrices the input matrices $A_j$ are dynamically computed as they are needed, in blocks of size $2^{\ell-k}$.

\begin{theorem}\label{thm:RFcomplexity}
Let $B$ be an upper bound on the bit-size of $\,\prod_{j=0}^{b-1}m_j$ such that $B/2^k$ is an upper bound on the bit-size of $\prod_{j=st}^{st+t-1}m_j$ for all $s$.
Let $B'$ be an upper bound on the bit-size of any entry of $V$,
let~$h$ be an upper bound on the bit-size of any $m_0,\ldots,m_{b-1}$ and any entry in $A_0,\ldots,A_{b-1}$, and assume that $\log r = O(h)$.
The running time of the \textsc{RemainderForest} algorithm is
\[
O(r^3\M(B+bh)(\ell-k) + 2^kr^2\M(B)+r\M(B')),
\]
and its space complexity is
\[
O(2^{-k}r^2(B+bh)(\ell-k)+r(B+B')).
\]
\end{theorem}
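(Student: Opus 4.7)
The plan is to derive both bounds by applying Theorem \ref{thm:RTcomplexity} to each of the $2^k$ depth-$(\ell-k)$ subtrees processed by \textsc{RemainderForest} and summing the results, taking care to account for the overhead in Step~2 (the updates of $Y^s$ and $V^s$) and the initial reduction $V^0 = V \bmod Y^0$. For the $s$th subtree, the input consists of $t = 2^{\ell-k}$ matrices of bit-size at most $h$, a tuple of moduli whose product has bit-size at most $B/2^k$ by hypothesis, and an initial row vector $V^s$ whose entries have bit-size $O(B)$ (since $V^s$ has been reduced modulo $Y^s$, a divisor of $Y^0$ of bit-size $\leq B$). Applying Theorem \ref{thm:RTcomplexity} with local parameters $B \mapsto B/2^k$ and $B' \mapsto B$ then gives a per-call cost of $O(r^3 \M((B+bh)/2^k)(\ell-k) + r\M(B))$ in time and $O(r^2 ((B+bh)/2^k)(\ell-k) + rB)$ in space.

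Summing the time bound over the $2^k$ subtrees, I would invoke the standard superadditivity property $2^k \M(x) \leq \M(2^k x)$ enjoyed by $\M(n) = O(n \log n \log\log n)$ to collapse the term $2^k r^3 \M((B+bh)/2^k)(\ell-k)$ into $O(r^3 \M(B+bh)(\ell-k))$; the contributions $2^k r\M(B)$ from the per-subtree initial reductions are subsumed in the $2^k r^2 \M(B)$ term produced below. For the Step~2 overhead, the exact division $Y^{s+1} = Y^s / m^s_{0,0}$ costs $O(\M(B))$ per iteration, and the update $V^{s+1} = V^s A^s_{0,0} \bmod Y^{s+1}$ is handled by first reducing the $r^2$ entries of $A^s_{0,0}$ (each of bit-size $O((B+bh)/2^k)$) modulo $Y^{s+1}$, then performing a row-vector/matrix product and a final reduction with operands of bit-size $O(B)$. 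Summed over $s$, the reductions contribute $O(r^2 \M(B+bh))$ via superadditivity, absorbed into the first time-bound term, while the products yield $O(2^k r^2 \M(B))$ in total. Finally, the initial reduction $V^0 = V \bmod Y^0$ contributes the $O(r\M(B'))$ term.

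For the space bound, the key point is that the $2^k$ subtrees are constructed and then discarded sequentially (per Step 2d), so only one depth-$(\ell-k)$ subtree is resident at any time, occupying $O(2^{-k} r^2 (B+bh)(\ell-k))$ bits. The data persisting across iterations -- the current vector $V^s$ and modulus $Y^s$, plus the input $V$ needed for the initial reduction -- fit in $O(r(B+B'))$ bits. The main subtlety I expect is the treatment of $A^s_{0,0}$: one must verify that reducing its entries modulo $Y^{s+1}$ in Step~2b does not inflate the running time beyond what the $r^3 \M(B+bh)(\ell-k)$ term already accommodates, and that all intermediate workspace for this reduction fits inside the per-subtree budget rather than contributing an extra term to the space complexity.
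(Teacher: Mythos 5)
Your proof is correct and follows essentially the same decomposition as the paper: apply Theorem \ref{thm:RTcomplexity} to each of the $2^k$ subtrees with the local substitutions $B \mapsto B/2^k$ and $B' \mapsto B$, collapse the sum via superadditivity of $\M$, and account separately for the step~2b updates and the initial reduction, with the space bound coming from one resident subtree plus persistent data. The one small omission is the $O(\M(B)\log b)$ cost of computing the product $Y^0 = m_0\cdots m_{b-1}$ in step~1 (which the paper charges explicitly), though this is absorbed into the stated bound since $\log b = \ell \leq (\ell-k) + 2^k$.
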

\begin{proof}
The time complexity of step 1 is $O(\M(B) \log b + r\M(B+B'))$.
There are $2^k$ calls to \textsc{RemainderTree} in step 2, each of which takes time
\[
O(r^3\M(2^{-k}B+2^{-k}bh)(\ell-k)+r\M(B)),
\]
by Theorem~\ref{thm:RTcomplexity}, since the bit-size of any entry of any $V^{s}$ is bounded by $O(B)$.
The cost of step~2b is  bounded by $O(\M(B) + r^2\M(B + 2^{-k} bh))$, thus each invocation of step 2 costs
\[
O(r^3 \M(2^{-k}B + 2^{-k} bh)(\ell - k) + r^2 \M(B)).
\]
Multiplying by $2^k$ yields the desired time bound.
The first term in the space bound matches the corresponding term in Theorem~\ref{thm:RTcomplexity}; the second term bounds the space needed for step 1 (and the output), and dominates the second term in the space bound of Theorem~\ref{thm:RTcomplexity}.
\end{proof}

With $k=0$ we have $\ell-k=\ell=\log_2 b$, and the bounds in Theorem \ref{thm:RFcomplexity} reduce to those of Theorem \ref{thm:RTcomplexity}.
With $k=\ell$ the \textsc{RemainderForest} algorithm has essentially optimal space complexity $O(rbh)$ (matching the size of its output), but its time complexity is then quasi-quadratic in $b$, rather than quasi-linear. The intermediate choice $k = \log_2 \ell + O(1)$ yields a time complexity that is at least as good as that of the \textsc{RemainderTree} algorithm (and may be  smaller by a significant constant factor), but with the space complexity improved by a factor of $\log b$. We will see below that for computing Hasse--Witt matrices, $bh$ is somewhat larger than~$B$, and this implies that an even better choice is $k = 2\log_2 \ell + O(1)$, reducing the space complexity by a further factor of $\log b$.
See Table~\ref{table:HWtimingsk} in \S \ref{sec:details} for an explicit example.

\begin{remark}
The space complexity can be further reduced using a time-space trade-off as described in \cite[Thm.\ 1.2]{Harvey:WilsonPrimes}.
In practice we find that when computing Hasse--Witt matrices using the \textsc{RemainderForest} approach, for $g\le 3$ and the range of $N$ of interest to us, space is not a limiting factor and no time-space trade-off is necessary.  See \S \ref{sec:details} for further details.
\end{remark}

\subsection{Computing the Hasse--Witt matrix}

We now give a complete algorithm for computing the Hasse--Witt matrix $W_p$ of a hyperelliptic curve at all admissible primes $p\le N$; as noted above, the bound $N$ on $p$ corresponds to a bound of $b=N/2$ on $n$.
While the basic approach has been explained in the previous sections, to achieve the best space complexity we must interleave the \textsc{RemainderForest} computations involving the matrices~$M_n$ and denominators $D_n$, so we use \textsc{RemainderTree} to directly handle each subtree, rather than using \textsc{RemainderForest} as a black box.
This also allows us to more carefully control the size of the moduli that we use, as discussed further below.

\bigskip

\noindent
\textbf{Algorithm} \textsc{ComputeHasseWittMatrices}
\vspace{2pt}

\noindent
Given a hyperelliptic curve $C\colon y^2 = f(x)=\sum_{i=0}^{d}f_ix^i$ of genus $g$, compute the Hasse--Witt matrices $W_p$ for admissible primes $p\le N$ as follows:
\begin{enumerate}[1.]
\item Construct a list $\mathcal{P}$ of the admissible primes $p=2n+1\le N$.
\item For $i$ from 1 to $g$:
\begin{enumerate}[a.]
\item Compute $M^{(i)}\in\Z[n]^{r\times r}$ and $D^{(i)}\in\Z[n]$ satisfying $v_{n+1}^{(i)} = v_n^{(i)}M^{(i)}(n)/D^{(i)}(n)$, as in \S\ref{sec:RR}.
\item Use \textsc{ComputeHasseWittRows} below to compute the $i$th row of $W_p$ for all $p\in\mathcal{P}$.
\end{enumerate}
\item Output the matrices $W_p$.
\end{enumerate}
\bigskip 

As discussed in \S\ref{sec:RR}, in order to minimize the power of $p=2n+1$ that we use as our moduli, let $e$ and $w$ be integers such that $p^e$ does not divide $D_0\cdots D_{n-1-w}$ for all sufficiently large admissible $p$. For $g\le 3$ using $e=g$ and $w\le 3$ suffices; in general $e$ and $w$ are both $O(g)$.  Our strategy is to compute the partial products $M_0\cdots M_{n-1-w}$ and $D_0\cdots D_{n-1-w}$ modulo $p^e$ using remainder trees, and to handle the last $w$ values of $M_j$ and $D_j$ separately;
this allows us to use a smaller value of $e$ than would otherwise be possible.
In the context of the \textsc{RemainderTree} algorithm, this means shifting the moduli $m_j$ by $w$ places to the left, relative to the $A_j$.
\bigskip

\noindent
\textbf{Algorithm} \textsc{ComputeHasseWittRows}
\vspace{2pt}

\noindent
Given $i \in [1, g]$, positive integers $e,w$, a list $\mathcal{P}$ of admissible primes $p \le N=2^{\ell+1}$, a matrix $M^{(i)}\in\Z[n]^{r\times r}$, and $D^{(i)}\in \Z[n]$,  compute the $i$th row of $W_p$ for all $p\in\mathcal{P}$ as follows:
\begin{enumerate}[1.]
\item Compute $Y=\prod_{p\in\mathcal{P}} p^g$, let $v=1$, and let $V\in \Z^r$ be the $(r-i+1)$th standard basis vector.
\item Fix $k=2\log_2 (\ell \sqrt g)+O(1)$, let $t=2^{\ell-k}$, and for $s$ from 0 to $2^k-1$:
\begin{enumerate}[a.]
\item For $st\le j < (s+1)t$, set $m_j=p^e=(2j+1+2w)^e$ if $p\in\mathcal{P}$ and $1$ otherwise.
\item Compute $M_j=M(j)$ and $D_j=D(j)$ for $st \le j < (s+1)t+w-1$.
\item Call \textsc{RemainderTree} with inputs $V,M_j,m_j$ to compute $C_j=V\prod_{u=0}^{j-1}M_u\bmod m_j$, $m^s = \prod m_j$, and $M^s=\prod M_j$, where $j$ ranges over integers from $st$ to $st+t-1$.
\item Call \textsc{RemainderTree} with inputs $v,D_j,m_j$ to compute $c_j=v\prod_{u=0}^{j-1}D_u\bmod m_j$ and $D^s=\prod D_j$, where $j$ ranges over integers from $st$ to $st+t-1$.
\item Set $Y\leftarrow Y/m^s$, $V\leftarrow V M^s\bmod Y$, and $v\leftarrow v D^s\bmod Y$.
\item Compute $v_j = C_jM_j\cdots M_{j+w-1}/(c_jD_j\cdots D_{j+w-1}) \bmod p$ for $st\le j < (s+1)t$ such that $p=2j+1+2w\in\mathcal{P}$, and extract the $i$th row of $W_p$ as the last $g$ entries of $v_j$.
\end{enumerate}
\item Output the $i$th row of each of the matrices $W_p$ for $p\in\mathcal{P}$.
\end{enumerate}
\bigskip

We now prove the main result announced in the introduction, which bounds the time and space complexity of \textsc{ComputeHasseWittMatrices} by $O(g^5\M(N\log (\norm f N))\log N)$ and $O(g^2 N(1+\log\norm f/\log N))$, respectively, assuming $g=O(\log N)$.

\begin{proof}[of Theorem \ref{thm:main}]
The time and space needed to enumerate the primes in $[1,N]$ may be bounded by $O(N\log^{2+\epsilon}N)$ and $O(N)$, respectively, via \cite[Prop.~2.3]{Harvey:WilsonPrimes}, by dividing the interval $[1,N]$ into $O(\log^3 N)$ subintervals.
It follows from Chebyshev's bound that $\mathcal{P}$ uses $O(N)$ space.
The complexity of \textsc{ComputeHasseWittRows} may be bounded as in the proof of Theorem~\ref{thm:RFcomplexity};
the only new elements are steps 2a and 2b, which have a total time complexity of $O(g^4 N\M(\log (\norm f N)))$, and step 2f, whose complexity is lower.  This is within our desired time bound, and the space complexity of these steps is dominated by the size of the output.

We now proceed as in the proof of Theorem~\ref{thm:RFcomplexity}.
We have $B=O(gN$), since $\sum_{p\le N}\log p \sim N$, and we note that the requirement that $B/2^k$ bound the bit-size of the product $m_{st}\cdots m_{st+t-1}$ is satisfied for any $k=O(\log \log N)$; these facts follow from the prime number theorem.
Further, $b=N/2$, $B'=O(1)$, $\ell=\log_2N-1$, $k=2\log_2(\ell \sqrt g)+O(1)$, and $h=O(g\log(\norm f N))$, since the polynomials in $M(n)$ and $D(n)$ all have degree $O(g)$ and coefficients of bit-size $O(g\log \norm f)$, and the moduli have bit-size $O(g\log N)$.
This yields the time bound
\[
O(g^3\M(gN+hN)\log N +g^3 \M(gN)\log^2 N  + g),
\]
and the space bound
\[
O(g^2(gN+hN)\log N/(g \log^2 N) + g^2N).
\]
This yields $O(g^4\M(N\log (\norm f N))\log N)$ time and $O(g^2 N(1+\log \norm f/\log N))$ space bounds for \textsc{ComputeHasseWittRows}, which is called $g$ times.
\end{proof}

\section{Implementation details and performance results}\label{sec:details}

We implemented the \textsc{ComputeHasseWittMatrices} algorithm in C, using the \texttt{gcc} compiler~\cite{gcc} and the GNU multiple-precision arithmetic library (GMP) \cite{gmp}. For the crucial operation of multiplying matrices with very large integer entries, we used a customized FFT implementation as described below.

\vspace{-6pt}
\subsection{Customized FFT}

The customized FFT uses the standard ``small primes'' approach, as outlined in \cite[Ch.~8]{vzGG-compalg}. To compute a product $uv$, where $u, v \in \Z$, we choose a parameter $c \geq 1$ and write $u = F(2^c)$ and $v = G(2^c)$, where $F,G \in \Z[x]$ have coefficients bounded by $2^c$. We then compute the polynomial product $FG \in \Z[x]$ and obtain $uv$ as $(FG)(2^c)$. To compute $FG$, we choose four suitable 62-bit primes $p_1, \ldots, p_4$ and compute $FG \bmod p_i$ in $(\Z/p_i \Z)[x]$ for each $i$, and then reconstruct $FG$ via the Chinese remainder theorem. The parameter $c$ is chosen as large as possible so that the coefficients of $FG$ remain bounded by $p_1 \cdots p_4$. Multiplication in $(\Z/p_i \Z)[x]$ is achieved by using Fourier transforms (number-theoretic transforms) over $\Z/p_i\Z$. This requires $p_i = 1 \bmod 2^{a}$, where $2^{a}$ is the transform length. Our implementation uses optimized modular arithmetic as in~\cite{Har-NTT}, truncated Fourier transforms to avoid power-of-two jumps in running times \cite{vdH-TFT-apps,vdH-2}, and ideas from \cite{Har-cachetft} to improve locality.

To multiply matrices we use the same strategy. If $u$ and $v$ are $r \times r$ integer matrices (recall that $r=d$ or $d-1$, where $d$ is the degree of the polynomial $f$ in the curve equation $y^2=f(x)$), we write $u = F(2^c)$ and $v = G(2^c)$ where now $F$ and $G$ are matrices of polynomials with small coefficients, or equivalently polynomials with matrix coefficients. We then perform $2r^2$ forward transforms, multiply the resulting Fourier coefficients (each coefficient is an $r \times r$ matrix over $\Z/p_i \Z)$, and perform $r^2$ inverse transforms, with a final linear-time substitution generating the desired product $uv$. Our implementation allows the polynomial entries to have signed coefficients, so that we can directly handle matrices $u$ and $v$ containing a mixture of positive and negative entries. Matrix-vector products are handled similarly.

The main advantage of this approach over a straightforward GMP implementation is that we require only $O(r^2)$ transforms rather than $O(r^3)$. In our computations the Fourier transforms make up the bulk of the time spent on matrix multiplication.

\vspace{-6pt}
\subsection{Timings}

The timings listed in this section were obtained using an 8-core Intel Xeon E5-2670 CPU running at 2.60GHz, with 20MB of cache and 32GB of RAM; in each case we list the total CPU time, in seconds, for a single-threaded implementation.
Table~\ref{table:HWtimings} lists timings for increasing values of $N$ with $g=1,2,3$ and each of the three possible values of $r$;
as in \S \ref{sec:RR} we have
\[
r = 
\begin{cases}
2g&\text{when $d=2g+1$ and $f_0=0$},\\
2g+1&\text{when $d=2g+1$ and $f_0\ne 0$},\\
2g+2&\text{when $d=2g+2$ and $f_0\ne 0$}.
\end{cases}
\]
Table~\ref{table:HWspace} gives the corresponding memory consumption for each case.

\begin{table}
\normalsize
\setlength{\tabcolsep}{6pt}
\begin{center}
\begin{tabular}{@{}lrrrrrrrrrrrr@{}}
&\multicolumn{12}{c}{$k$}\\
\cmidrule{2-13}
&0&1&2&3&4&5&6&7&8&9&10&11\\
\midrule
time (s)&750&718&661&602&535&483&\textbf{459}&466&540&736&1145&2055\\
space (MB) &8529&4416&2215&1089&533&311&\textbf{220}&178&162&153&149&147\\
\bottomrule
\vspace{2pt}
\end{tabular}
\caption{Time (CPU seconds) and space (MB) for Hasse--Witt matrix computations for the curve $y^2 = 2x^7+3x^6+5x^5+7x^4+11x^3+13x^2\
+17x+19$ with $N=20$ and varying $k$.}\label{table:HWtimingsk}
\end{center}
\end{table}

\begin{table}
\normalsize
\setlength{\tabcolsep}{9pt}
\begin{center}
\begin{tabular}{@{}rrrrrrrrrr@{}}
&\multicolumn{3}{c}{$g=1$}&\multicolumn{3}{c}{$g=2$}&\multicolumn{3}{c}{$g=3$}\\
\cmidrule(r){2-4}\cmidrule(r){5-7}\cmidrule(r){8-10}
$N$&$r=2$&$r=3$&$r=4$&$r=4$&$r=5$&$r=6$&$r=6$&$r=7$&$r=8$\\
\midrule
$2^{14}$&$<1$&$<1$&$<1$&$<1$&$<1$&1&1&2&3\\
$2^{15}$&$<1$&$<1$&$<1$&1&1&2&3&6&9\\
$2^{16}$&$<1$&$<1$&1&2&3&5&8&14&21\\
$2^{17}$&$<1$&1&1&4&7&12&20&34&52\\
$2^{18}$&1&2&4&9&17&29&49&81&123\\
$2^{19}$&1&4&8&22&40&69&116&192&294\\
$2^{20}$&3&9&20&50&94&166&282&459&694\\
$2^{21}$&7&21&47&123&227&398&667&1085&1633\\
$2^{22}$&17&49&114&287&534&946&1560&2540&3810\\
$2^{23}$&38&115&268&645&1240&2230&3660&5940&9100\\
$2^{24}$&89&271&641&1510&2920&5260&8490&13800&20600\\
$2^{25}$&202&628&1470&3430&6740&11800&19600&31800&47200\\
$2^{26}$&470&1475&3390&7930&15800&27400&44700&72900&107000\\
\bottomrule
\vspace{2pt}
\end{tabular}
\caption{Time (CPU seconds) for Hasse--Witt matrix computations for the curve $y^2 = 2x^d + 3x^{d-1} + \cdots + p_{d+1}$, where $p_n$ is the $n$th prime ($f_0 = 0$ for $r=2g$).}\label{table:HWtimings}
\end{center}
\end{table}

The impact of varying the parameter $k$, which determines the number $2^k$ of subtrees used in the \textsc{RemainderForest} algorithm, is illustrated for a particular example with $g=3$ and $N=20$ in Table~\ref{table:HWtimingsk}.
In all of our other tests the parameter $k$ was chosen to optimize time; the optimal choice of $k$ varies with both $N$ and $r$ and in our tests ranged from $4$ to $8$.
As can be seen in Table~\ref{table:HWtimingsk}, the value of $k$ that optimizes time also yields a space utilization that is much better than would be achieved by the original \textsc{RemainderTree} algorithm (the case $k=0$).
Even in our largest tests, the time-optimal value of $k$ yielded a space utilization under 20GB, well within the 32GB available on our test system.
By contrast, the original \textsc{RemainderTree} algorithm would have required more than 1TB of memory in our larger tests.

\begin{table}
\normalsize
\setlength{\tabcolsep}{9pt}
\begin{center}
\begin{tabular}{@{}rrrrrrrrrr@{}}
&\multicolumn{3}{c}{$g=1$}&\multicolumn{3}{c}{$g=2$}&\multicolumn{3}{c}{$g=3$}\\
\cmidrule(r){2-4}\cmidrule(r){5-7}\cmidrule(r){8-10}
$N$&$r=2$&$r=3$&$r=4$&$r=4$&$r=5$&$r=6$&$r=6$&$r=7$&$r=8$\\
\midrule
$2^{14}$&$<1$&$<1$&$<1$&$<1$&$<1$&$<1$&$<1$&$<1$&1\\
$2^{15}$&$<1$&$<1$&1&1&1&1&4&6&8\\
$2^{16}$&1&1&1&3&5&7&9&12&16\\
$2^{17}$&2&2&4&6&10&14&18&25&33\\
$2^{18}$&5&5&8&13&20&29&38&51&69\\
$2^{19}$&11&11&17&27&41&59&79&106&144\\
$2^{20}$&16&21&35&53&83&121&162&220&295\\
$2^{21}$&32&42&71&108&169&249&332&450&610\\
$2^{22}$&63&84&145&218&346&517&682&942&1258\\
$2^{23}$&124&170&307&444&716&1064&1396&1940&2614\\
$2^{24}$&247&634&634&920&1467&2195&2869&3980&5385\\
$2^{25}$&498&708&1300&1890&3014&3398&5865&8231&11162\\
$2^{26}$&1002&1440&2679&3843&6478&6950&12134&12925&17137\\
\bottomrule
\vspace{2pt}
\end{tabular}
\caption{Space (MB) for Hasse--Witt matrix computations for  the curve $y^2 = 2x^d + 3x^{d-1} + \cdots + p_{d+1}$, where $p_n$ is the $n$th prime ($f_0=0$ for $r=2g$).}\label{table:HWspace}
\end{center}
\end{table}


Tables~\ref{table:smalljac} compares the performance of the new algorithm (in the column labelled \texttt{hassewitt}) to the \texttt{smalljac} implementation described in \cite{KS:Lseries}.
In genus 2 the \texttt{smalljac} implementation relies primarily on group computations in the Jacobian of the curve, as described in \cite{KS:Lseries}, and the current version \cite{smalljac} includes additional improvements from \cite{Sutherland:p-groups}.
As can be seen in the table, the new algorithm surpasses the performance of \texttt{smalljac} when $N$ is between $2^{18}$ and~$2^{19}$ and is more than 12 times faster for $N=2^{26}$.

As noted in \cite{KS:Lseries}, for genus 3 curves, Harvey's optimization \cite{Har-kedlaya} of Kedlaya's algorithm~\cite{Kedlaya:algorithm} is faster than using group computations in the Jacobian for $N\ge 2^{16}$.
Table~\ref{table:hypellfrob} compares the performance of the new algorithm to an implementation based on Harvey's \texttt{hypellfrob} library~\cite{hypellfrob}, using one digit of $p$-adic precision (sufficient to compute the Hasse--Witt matrix).
In genus 3 the new algorithm is substantially faster than \texttt{hypellfrob} for all the values of $N$ that we tested, and more than 20 times faster for $N=2^{26}$.
We did not include a column for the case $r=8$ in Table~\ref{table:hypellfrob} because the \texttt{hypellfrob} library requires~$d$ to be odd.
\begin{table}
\normalsize
\setlength{\tabcolsep}{8pt}
\begin{center}
\begin{tabular}{@{}rrrrrrr@{}}
&\multicolumn{2}{c}{$r=4$}&\multicolumn{2}{c}{$r=5$}&\multicolumn{2}{c}{$r=6$}\\
\cmidrule(r){2-3}\cmidrule(r){4-5}\cmidrule(r){6-7}
$N$&\texttt{hassewitt}&\texttt{smalljac}&\texttt{hassewitt}&\texttt{smalljac}&\texttt{hassewitt}&\texttt{smalljac}\\
\midrule
$2^{14}$&0.2&0.2&0.4&0.2&0.7&0.3\\
$2^{15}$&0.6&0.5&1.1&0.6&1.9&0.7\\
$2^{16}$&1.4&1.7&2.8&1.7&4.9&2.0\\
$2^{17}$&3.5&5.6&6.8&5.6&11.9&6.4\\
$2^{18}$&8.6&19.9&16.8&20.2&29.0&22.1\\
$2^{19}$&20.6&76.0&39.7&76.4&69.1&83.4\\
$2^{20}$&48.9&257\phantom{.0}&94.4&257\phantom{.0}&166\phantom{.0}&284\phantom{.0}\\
$2^{21}$&123\phantom{.0}&828\phantom{.0}&227\phantom{.0}&828\phantom{.0}&398\phantom{.0}&914\phantom{.0}\\
$2^{22}$&287\phantom{.0}&2630\phantom{.0}&534\phantom{.0}&2630\phantom{.0}&946\phantom{.0}&2900\phantom{.0}\\
$2^{23}$&645\phantom{.0}&8560\phantom{.0}&1240\phantom{.0}&8570\phantom{.0}&2230\phantom{.0}&9520\phantom{.0}\\
$2^{24}$&1510\phantom{.0}&28000\phantom{.0}&2920\phantom{.0}&28000\phantom{.0}&5260\phantom{.0}&31100\phantom{.0}\\
$2^{25}$&3430\phantom{.0}&92200\phantom{.0}&6740\phantom{.0}&92300\phantom{.0}&11800\phantom{.0}&102000\phantom{.0}\\
$2^{26}$&7930\phantom{.0}&314000\phantom{.0}&15800\phantom{.0}&316000\phantom{.0}&27400\phantom{.0}&349000\phantom{.0}\\
\bottomrule
\vspace{2pt}
\end{tabular}
\caption{Performance comparison with \emph{\texttt{smalljac}} in genus $2$. Times in CPU seconds.}\label{table:smalljac}
\end{center}
\end{table}

\begin{table}
\normalsize
\begin{center}
\begin{tabular}{@{}rrrrr@{}}
&\multicolumn{2}{c}{$r=6$}&\multicolumn{2}{c}{$r=7$}\\
\cmidrule(r){2-3}\cmidrule(r){4-5}
$N$&\texttt{hassewitt}&\texttt{hypellfrob}&\texttt{hassewitt}&\texttt{hypellfrob}\\
\midrule
$2^{14}$&1.3&6.7&2.0&6.8\\
$2^{15}$&3.4&15.5&5.5&15.6\\
$2^{16}$&8.3&37.4&13.6&37.6\\
$2^{17}$&20.2&95.1&33.3&95.0\\
$2^{18}$&48.6&249\phantom{.0}&80.4&250\phantom{.0}\\
$2^{19}$&116\phantom{.0}&680\phantom{.0}&192\phantom{.0}&681\phantom{.0}\\
$2^{20}$&282\phantom{.0}&1910\phantom{.0}&459\phantom{.0}&1920\phantom{.0}\\
$2^{21}$&667\phantom{.0}&5450\phantom{.0}&1090\phantom{.0}&5460\phantom{.0}\\
$2^{22}$&1560\phantom{.0}&16200\phantom{.0}&2540\phantom{.0}&16300\phantom{.0}\\
$2^{23}$&3660\phantom{.0}&49400\phantom{.0}&5940\phantom{.0}&49400\phantom{.0}\\
$2^{24}$&8490\phantom{.0}&152000\phantom{.0}&13800\phantom{.0}&152000\phantom{.0}\\
$2^{25}$&19600\phantom{.0}&467000\phantom{.0}&31800\phantom{.0}&467000\phantom{.0}\\
$2^{26}$&44700\phantom{.0}&1490000\phantom{.0}&72900\phantom{.0}&1490000\phantom{.0}\\
\bottomrule
\vspace{2pt}
\end{tabular}
\caption{Performance comparison with \emph{\texttt{hypellfrob}} in genus $3$. Times in CPU seconds.}\label{table:hypellfrob}
\end{center}
\end{table}

\bibliographystyle{amsplain}
\providecommand{\bysame}{\leavevmode\hbox to3em{\hrulefill}\thinspace}
\providecommand{\MR}{\relax\ifhmode\unskip\space\fi MR }
\providecommand{\MRhref}[2]{%
  \href{http://www.ams.org/mathscinet-getitem?mr=#1}{#2}
}
\providecommand{\href}[2]{#2}

\affiliationone{David Harvey\\
School of Mathematics and Statistics\\
University of New South Wales\\
Sydney NSW 2052\\
Australia
\email{d.harvey@unsw.edu.au}
}
\affiliationtwo{Andrew V. Sutherland\\
Department of Mathematics\\
Massachusetts Institute of Technology\\
Cambridge, MA  02139\\
USA
\email{drew@math.mit.edu}
}

\end{document}